\begin{document}
\title[]{Strong approximation and central limit theorems for multiscale stochastic gene networks}

\author[B. Huguet]{Baptiste Huguet} 
\address{Univ Rennes, CNRS, IRMAR - UMR 6625, F-35000 Rennes, France}
\email{baptiste.huguet@math.cnrs.fr}
\urladdr{https://www.math.cnrs.fr/~bhuguet/}
\subjclass{60G57, 60J25, 60J75, 60F05, 60F15, 92B05}
\keywords{Stochastic gene networks, PDMP, Poisson random measure, Central limit theorem}
\begin{abstract}
We study a mutliscale jump process introduced in a work by Crudu, Debussche, Muller and Radulescu. Using an adequate coupling, we are able to prove the strong convergence, for the uniform topology, to a piecewise deterministic Markov process. Under some additional regularity, we also obtain a central limit theorem and prove that the fluctuations of the continuous scale converge, in a weaker sense, to the solution of a stochastic differential equation. 
\end{abstract}
\maketitle
\setcounter{tocdepth}{1}
\tableofcontents

\section{Introduction}

Molecular biology deals with complex systems of reactions, involving numerous reagents, such as the gene regulatory network. This network models the expression of genes and the synthesis of proteins. Its reagents are mostly mRNA and proteins. Each reagents may have control (inhibition or stimulation) over every reactions. 

A relevant mathematical model of this network should capture diverse features that are experimentally observable, and should be implemented effectively. Stochastic models, and more precisely, Markovian models, have been used to study gene networks since the seminal work of Delbr\"uck \cite{Delbr}, and recent experiments show that these models are more appropriate to study gene networks than deterministic ones (\cite{Kepler}). These models have proved their relevance by reproducing experimentally observed behaviours such as burst-like production \cite{Cai}, emergence of phenotypically distinct subgroups in an isogenic population \cite{kaern}, noise propagation \cite{Paul}, or metastability \cite{Gard}. From a computational perspective, Markovian models are really challenging. Exact algorithms such as the Stochastic Simulation Algorithm (SSA) from \cite{Gill} are extremely time-consuming, especially for highly interconnected systems. 

The search of higher efficiency leads to hybrid, or multiscale, models. The reagents are classified according to their abundances : some species, in large quantity, are treated as continuous variables (as a concentration limit), while the other species, in small amount are still treated as discrete random variables. It results a hybrid process, $\R_+^n\times\N^d$-valued. The work \cite{CDR} suggests that hybrid piecewise deterministic Markov processes (PDMP) are a relevant model for gene regulatory network. These processes have been introduced by \cite{Davis84}, in connection with queuing theory.  Moreover, \cite{CDMR} shows that hybrid PDMP can be obtained as limits, in distribution, of Markovian discrete models, in several situations. The goal of this article is to prove that, using a well-chosen probabilistic representation of the reactions, these convergence results hold for stronger probabilistic convergence. In addition, we sharpen the convergence, with an explicit speed of convergence and a central limit theorem for the fluctuations around the limit. Besides, our proof relies on  simpler arguments than \cite{CDMR}.

The convergence of the mono-scale model has been studied by Kurtz. In \cite{Kurtz70}, he proves the convergence in probability for a pure jump Markov process, $X^N$, with general transition kernel, to the solution, $X$, of an ordinary differential equation (ODE). In \cite{Kurtz}, he also studies the fluctuations, through a central limit theorem, and the diffusion approximation, together with speed of convergence. He uses a coupling representation of $X^N$ and $X$, using independent Poisson processes $P_r$  
\begin{equation*}
\left\{\begin{aligned}
X^N(t) = & X^N_0 + \sum_{r\in\RR}\frac{h_r}{N}P_r\left(N\int_0^t\lambda_r(X^N(s))\, ds\right)\\
X(t) = & X_0 + \sum_{r\in\RR} h_r\int_0^t\lambda_r(X(s))\, ds\\
\end{aligned}\right.
\end{equation*}
where $h_r\in\Z^n$ are the directions of the jumps and $\lambda_r$ the rates. This coupling has been used in \cite{CDMR}, for hybrid processes, but it is not really adequate to the multiscale setting. This explains why they could not obtain a strong convergence result, nor a central limit theorem. In our work, we use a coupling through Poisson random measures. This coupling has been used in several approaches arising from mathematical modelling, especially in epidemic models. The article \cite{PS} studies a population (the abundant scale) in random environment (the jumping discrete scale). They obtain the convergence in probability to a PDMP and results on the extinction time of the epidemic. Their model only allows elementary jumps ($\pm 1/N$ on only one coordinate). In a different approach, \cite{HPV} and then \cite{HPV2} study a spatial epidemic model, with a mean-field point of view. Their hybrid process consists in the spatial position of an individual (the continuous scale) and its infectious state (discrete scale). In this case, the limit of the position jump process is not interpreted as a limit in concentration but the averaging influence of a large population. They obtain convergence and fluctuations results for the empirical measure of the population, in Wasserstein distance. At last, \cite{Bonnet} retrieves the convergence establish in \cite{CDMR}, using Poisson random measures. Her work is motivated by the modelling of blood cancer and this explains why her work deals with linear or rational rate functions. Moreover, she does not study stronger convergence results.

In this article, we study the process $Z^N=(X^N,Y^N)\in\R^n\times\N^d$ defined as 
\begin{equation*}
\left\{\begin{aligned}
X^N(t) = & X^N_0 + \sum_{r\in\RR_C}\frac{h_r}{N}\int_{[0,t]\times\R_+}\ind_{u\leq N\lambda_r(Z^N(s^-))}\,\QQ_r(dsdu)\\
  & + \sum_{r\in\RR_D}\frac{h_r}{N}\int_{[0,t]\times\R_+}\ind_{u\leq \mu_r(Z^N(s^-))}\,\QQ_r(dsdu)\\
Y^N(t) = & Y^N_0 + \sum_{r\in\RR_D} e_r\int_{[0,t]\times\R_+}\ind_{u\leq \mu_r(Z^N(s^-))}\,\QQ_r(dsdu)\\
\end{aligned}\right.
\end{equation*}
where the $(\QQ_r)_{r\in\RR}$ are independent Poisson random measures. Notations and assumptions will be clarified in the next section. The two main results of our article are the convergence in probability of $(Z^N)_{N\geq1}$, in Theorem \ref{prop:LGN-s} and a central limit theorem, in Theorem \ref{prop:TCL-s}. As we stressed out, the convergence in probability for the uniform topology, has been proved for some restricted models (see \cite{PS}) without explicit speed of convergence. By contrast, we even reach $\L^1$ convergence, under some additional boundedness assumptions, and the speed of convergence. A convergence result is not really complete, nor usable without a central limit theorem for the fluctuations. To the best of our knowledge, a central limit theorem for hybrid jump process is a novelty of our work and has not been studied in the literature yet.

Let us describe the structure of this article. In Section \ref{sec:model}, we present our mathematical model and the standard assumptions under which it is well-posed. In Section \ref{sec:lgn}, we prove the convergence, in probability, for the uniform topology, of the jump process to a piecewise deterministic process. We also obtain a speed of convergence which suggests how to study the fluctuations of the continuous scale. Section \ref{sec:tcl} is dedicated to the central limit theorem. We show that the fluctuations process converges to the solution of a stochastic differential equation (SDE). In this case, the convergence is weaker in the probabilistic sense, and requires more complex tools.

\section{Gene network stochastic model}
\label{sec:model}
We consider chemical species, indexed by $i$ from $1$ to $n+d$, subject to a finite set of chemical reaction $R_r$, $r\in\RR$. Let $\ZZ\in\N^{n+d}$ be the vector consisting of the number of each species. Each reaction $R_r$ induces a transformation of system $\ZZ\to \ZZ+\gamma_r$, with rate $\Lambda_r(\ZZ)$. It results that, the process $Z$ evolves as a Markov process whose law is completely described by its generator
\[\LL f(z) =\sum_{r\in\RR}\left[f(z+\gamma_r)-f(z)\right]\Lambda_r(z).\]
We model a multiscale system with macroscopic and microscopic quantities of species. Let $N$ be a scaling parameter. The vector $\ZZ$ admits the decomposition $\ZZ=(N X^N,Y^N)\in\N^n\times\N^d$. The first component, describes the species in large abundance and is proportional to the scaling parameter. The vector $X^N$ can be interpreted as  a vector of concentrations. This is the concentration, or continuous, scale. The second component describes the species in scarce quantity, and so, is not rescaled. This is the discrete scale.
The set of reactions is also partitioned in two classes $\RR = \RR_C\cup\RR_D$, according to the species involved. For $r\in\RR_C$, the reaction only involves abundant species and has a rate proportional to the scaling parameter, i.e.  
\[\gamma_r =(h_r,0)\in\Z^n\times\Z^d,\quad \Lambda_r(\ZZ) = N \lambda_r(X^N,Y^N),\quad\forall r\in\RR_C.\]
This can be interpreted as quick reactions on the concentration scale. One the other hand, for $r\in\RR_D$, the reactions involve species from both scales and are slow 
\[\gamma_r =(h_r,e_r)\in\Z^n\times\Z^d,\quad \Lambda_r(\ZZ) =  \mu_r(X^N,Y^N),\quad\forall r\in\RR_D.\]
The law of the process $Z^N =(X^N,Y^N)$ is characterised by its generator
\begin{align*}
\LL_N f(x,y) =& \sum_{r\in\RR_C}\left[f\left(x+\frac{h_r}{N}, y\right)-f(x,y)\right]N\lambda_r(x,y)\\ 
&+ \sum_{r\in\RR_D}\left[f\left(x+\frac{h_r}{N}, y+ e_r\right)-f(x,y)\right]\mu_r(x,y).
\end{align*}
As explained in \cite{CDMR}, the sequence $(Z^N)_N$ converges in distribution to a PDMP whose law is characterised by its generator
\[\LL_\infty f(x,y) = F(x,y)\cdot\nabla_x f(x,y) + \sum_{r\in\RR_D} \left[f(x,y+e_r)-f(x,y)\right]\mu_r(x,y),\]
where $F : \R^n\times\N^d\to\R^n$ is defined as
\[F(x,y) = \sum_{r\in\RR_C} h_r\lambda_r(x,y).\]
However, there are several ways to construct a sequence of processes $(Z^N)_N$ and a process $Z=(X,Y)$, with these prescribed laws, and all of them may not allow anything stronger than the convergence in distribution. The classical construction, from \cite{CDMR} or its spatial generalisation \cite{DNN21}, uses independent Poisson processes, indexed by random clocks depending on the rates $\Lambda_r$. This coupling is not appropriate as the jump times of $Y^N$ and $Y$ are different with large probability. Hence, $\sup|Y^N-Y|$ does not converge to $0$. In this work, we use a coupling by random Poisson measures. As explained in \cite{HPV}, this new coupling is particularly adapted to $\L^1$ convergence for the uniform topology of discrete processes.

Let $\QQ_r$, $r\in\RR$ be independent Poisson random measures on $\R_+^2$ with intensity $dsdu$ (see \cite{Sato} for definition and properties of Poisson random measures). Let $Z^N_0=(X_0^N, Y_0^N)_{N\geq1}\in\N^n\times\N^d$ be a sequence of independent integrable random variables, independent of $\QQ_r$, $r\in\RR$. We define the sequence of processes $Z^N=(X^N, Y^N)$, for $N\geq1$, for $t\geq0$, by
\begin{equation}\label{eq:XN}
\left\{\begin{aligned}
X^N(t) = & X^N_0 + \sum_{r\in\RR_C}\frac{h_r}{N}\int_{[0,t]\times\R_+}\ind_{u\leq N\lambda_r(X^N(s^-), Y^N(s^-))}\,\QQ_r(dsdu)\\
  & + \sum_{r\in\RR_D}\frac{h_r}{N}\int_{[0,t]\times\R_+}\ind_{u\leq \mu_r(X^N(s^-), Y^N(s^-))}\,\QQ_r(dsdu)\\
Y^N(t) = & Y^N_0 + \sum_{r\in\RR_D} e_r\int_{[0,t]\times\R_+}\ind_{u\leq \mu_r(X^N(s^-), Y^N(s^-))}\,\QQ_r(dsdu)\\
\end{aligned}\right.
\end{equation}

With minimal regularity assumptions, this system has a unique solution, defined on a stochastic interval $[0,\tau^N[$. Its solution could be extended to $\R_+$ by adding of a cemetery point. In the following, we will make some assumptions on the total number of jump on $[0,t]$, $J^N_t$
in order to avoid the cemetery construction.  
We also define a PDMP, $Z=(X,Y)$, associated to the limit generator $\LL_\infty$. Let $Z_0=(X_0,Y_0)$ be a random variable, independent of the sequence $(Z^N_0)_N$ and $(\QQ_r)_{r\in\RR}$. We have
\begin{equation}\label{eq:X}
\left\{\begin{aligned}
X(t) =& X_0 +\int_0^tF(X(s), Y(s))\, ds\\
Y(t) = &Y_0 + \sum_{r\in\RR_D} e_r\int_{[0,t]\times\R_+}\ind_{u\leq \mu_r(X(s^-), Y(s^-))}\,\QQ_r(dsdu)\\
\end{aligned}\right.
\end{equation}

The general conditions under which a PDMP is well-posed are given in \cite{Davis}. It combines regularity assumptions on the rates, and a control over $J_t$, the number of jumps of $Y$ on $[0,t]$ 
\[J_t = \sum_{r\in\RR_D} \int_{[0,t]\times\R_+}\ind_{u\leq \mu_r(X(s^-), Y(s^-))}\,\QQ_r(dsdu).\] For our analysis, we use the following hypothesis.

\begin{ass}\label{ass:standard}
The rates $\lambda_r,\mu_r : \R_+^n\times\N^d\to\R_+$, for $r\in\RR$ are measurable and locally Lipschitz continuous on the first variable. Moreover, for all $(x,y)\in\R_+\times\N$, the vector field $F(\cdot, y)$ determines a unique flow $\phi(t,x,y)$, defined for all $t\geq 0$, solution to
\[\frac{d}{dt}\phi(t,x,y) = F\left(\phi(t,x,y), y\right).\]
The numbers of jumps, on every compact time interval, have a bounded moment
\[\forall N\geq1, \forall t\geq0,\quad \E\left[J^N_t\right]<\infty, \quad \E[J_t]<\infty.\]
\end{ass}

In many biological models, the rates are polynomial, so our assumption is relevant. This also explain why we do not assume boundedness on the rates. This assumption is sufficient to ensure the existence of the PDMP for all time.

\begin{prop}
Under Assumption \ref{ass:standard}, the systems \eqref{eq:XN} for all $N\geq1$, and \eqref{eq:X}, have a unique solution, defined on $\R_+$.
\end{prop}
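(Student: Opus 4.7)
The plan is a pathwise construction of \eqref{eq:XN} and \eqref{eq:X} by induction on successive jump times, using pointwise the data of the driving Poisson measures $\QQ_r$, followed by a non-explosion argument based on the moment bounds of Assumption \ref{ass:standard}. For the prelimit \eqref{eq:XN}, I would exploit the fact that $Z^N$ is pure jump. Setting $\tau^N_0 = 0$ and given $\tau^N_k$ and $Z^N_{\tau^N_k}$, the rates $\lambda_r(Z^N_{\tau^N_k})$ and $\mu_r(Z^N_{\tau^N_k})$ are finite real numbers, since the $\lambda_r$ and $\mu_r$ are locally Lipschitz in the continuous variable and $Y^N$ takes values in the discrete set $\N^d$. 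The next jump time $\tau^N_{k+1}$ is defined as the infimum of the $s > \tau^N_k$ such that some $\QQ_r$ has an atom $(s,u)$ with $u$ below the corresponding rate, a quantity a.s.\ strictly greater than $\tau^N_k$; the updated state $Z^N_{\tau^N_{k+1}}$ is read off the triggered reaction, and the process is extended as a constant on the open interval. This recursion produces a unique solution on the random interval $[0, \tau^N_\infty)$, with $\tau^N_\infty = \lim_k \tau^N_k$.

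For the PDMP \eqref{eq:X}, the same induction applies with $\tau_0 = 0$, except that between successive jump times of $Y$ the continuous coordinate evolves along the deterministic flow: on $[\tau_k, \tau_{k+1})$ one sets $Y \equiv Y_{\tau_k}$ and $X(t) = \phi(t - \tau_k, X_{\tau_k}, Y_{\tau_k})$, which is globally defined by the flow hypothesis in Assumption \ref{ass:standard}. The next jump time is then obtained by thinning the $\QQ_r$ against the continuous rate curves $s \mapsto \mu_r(\phi(s-\tau_k, X_{\tau_k}, Y_{\tau_k}), Y_{\tau_k})$; since these are locally bounded, the corresponding cumulative intensities are finite on bounded intervals and the first atom below the graph of the total rate yields a well-defined, possibly infinite, $\tau_{k+1}$. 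Uniqueness is automatic on each inter-jump interval, forced either by constancy (in the prelimit) or by uniqueness of the flow (in the limit).

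The only non-routine point, and the true obstacle, is to exclude accumulation $\tau^N_\infty < \infty$, and similarly $\tau_\infty < \infty$, since local Lipschitz continuity does not by itself provide any uniform control on the rates. This is precisely where Assumption \ref{ass:standard} is used: as $J^N_t$ counts the total number of jumps of $Z^N$ on $[0,t]$, the bound $\E[J^N_t] < \infty$ gives $J^N_t < \infty$ a.s.\ for every $t$, which rules out accumulation of jump times at any finite time. The same argument, applied to $J_t$, handles the PDMP. Combined with the pathwise uniqueness established above, this produces the required unique solutions defined on all of $\R_+$.
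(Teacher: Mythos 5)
Your proposal is correct and is essentially the argument the paper relies on: the paper states this proposition without a written proof, pointing instead to the standard inter-jump-time construction (with the preceding remark that the solution exists on a stochastic interval $[0,\tau^N[$ and to the conditions in Davis's work for PDMPs), and then uses the moment bounds on $J^N_t$ and $J_t$ from Assumption \ref{ass:standard} exactly as you do to rule out accumulation of jump times and avoid the cemetery-point extension. Your handling of the apparent circularity --- first constructing the solution up to the explosion time, then invoking $\E[J^N_t]<\infty$ to show that time is infinite --- is the right way to read the assumption.
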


As explained in \cite{CDMR}, Assumption \ref{ass:standard} may be easy to check on explicit models, but it is hard to formulate a sufficiently general criterion under which it holds. However, if the rates are bounded, the number of jumps is easily bounded. 

For all $z=(x,y)\in\R^n\times\Z^d$, we denote by $|x|$ and $|y|$, the Euclidean norm on $\R^n$ and $\R^d$ respectively, and we endow the product space $\R^n\times\Z^d$ with the norm $|z| = |x|+|y|$. For $T>0$, we denote by $\DD([0,T])$ the Skorokhod space on $[0,T]$, consisting of functions, defined on $[0,T]$, $\R^{n+d}$-valued, right-continuous and left-limited. This space can be endowed by two different topologies. The first one is the uniform topology, associated to the distance $\sup_{0\leq s\leq T} |f(s) - g(s)|$ for all $f,g\in\DD([0,T])$. This topology is particularly adequate on the subspace $\CC([0,T])$ of continuous functions. On $\DD([0,T])$, the uniform topology gives a very strict version of convergence. That is why, we usually use a weaker topology, more adapted to jump processes : the Skorokhod topology. This topology is also metrisable. In the following, we use the distance
\[d_{\DD([0,T])}(f, g) = \inf_\lambda \max\left\{\esssup_{0\leq s\leq T}|\lambda'(s)-1|, \sup_{0\leq s\leq T}|f\circ\lambda(s)-g(s)|\right\},\]
where the infimum is taken over the set of continuous, strictly increasing function $\lambda : [0,T]\to[0,T]$ such that $\lambda(0)=0$ and $\lambda(T) = T$. Let us remark that this metric is not the usual Skorokhod metric, as $\left(\DD([0,T]), d \right)$ is not complete, but it defines the same topology as the usual Skorokhod metric, for which, the space is complete (see \cite{Bill} for more details on the Skorokhod space and \cite{Kern} for more intuition on the different topologies on $\DD$).

\section{Strong convergence}
\label{sec:lgn}
The goal of this section is to prove the strong convergence of $\left(Z^N\right)_{N\geq1}$ to $Z$, for the uniform topology on $\DD([0,T])$. To that end, we prove a $\L^1$ convergence under strengthened assumptions, and then, we recover the convergence result, under the standard regularity assumptions, thanks to a truncation argument, in the spirit of \cite{CDMR}. In this section, without explicit mention, we assume that we have the following bounds.

\begin{ass}\label{ass:EZbound}
For all $r\in\RR$, the rates $\lambda_r$ and $\mu_r$ are bounded and globally Lipschitz continuous. We denote by $L>0$ a common upper bound of the rates and their Lipschitz constants.
\end{ass}
Let us note that under this assumption, $F$ is globally Lipschitz continuous, with constant
\[C_C = L\sum_{r\in\RR_C}|h_r|.\] 
The global boundedness from Assumption \ref{ass:EZbound}, allows  obtaining Gr\"onwall's bounds, in a $\L^1$ sense, for each scale, and so a convergence result. We begin with the discrete scale. The following lemma relies on the random measures coupling between $Y^N$ and $Y$.

\begin{lemme}
There exists $C_D>0$ such that for all $T\geq0$
\[\E\left[\sup_{0\leq t\leq T} |Y^N(t) - Y(t)|\right] \leq \E[|Y^N_0-Y_0|] + C_D\int_0^T\E\left[\sup_{0\leq t\leq s} |Z^N(t) - Z(t)|\right]\, ds.\]
\end{lemme}
\begin{proof}
By definition, for all $0\leq t\leq T$, we have
\begin{align*}
|Y^N(t) - Y(t)| 
&\leq |Y^N(0) - Y(0)|  + \sum_{r\in\RR_D}|e_r|\int_{[0,t]\times\R_+} \left|\ind_{u\leq\mu_r(Z^N(s^-))}-\ind_{u\leq\mu_r(Z(s^-))}\right|\,\QQ_r(dsdu)\\
&\leq |Y^N(0) - Y(0)|  + \sum_{r\in\RR_D}|e_r|\int_{[0,T]\times\R_+} \ind_{u\in \left[\mu_r(Z^N(s^-)), \mu_r(Z(s^-))\right]}\,\QQ_r(dsdu)\\
\end{align*}

Hence, 
\begin{align*}
\E\left[\sup_{0\leq t\leq T} |Y^N(t) - Y(t)|\right] 
&\leq \E[|Y^N_0-Y_0|]  + \sum_{r\in\RR_D}|e_r|\E\left[\int_0^T \left|\mu_r(Z^N(s^-)) - \mu_r(Z(s^-))\right|\, ds\right]\\
&\leq \E[|Y^N_0-Y_0|]  + L\sum_{r\in\RR_D}|e_r|\int_0^T\E\left[\left|Z^N(s) - Z(s)\right|\right]\, ds\\
&\leq \E[|Y^N_0-Y_0|]  + C_D\int_0^T\E\left[\sup_{0\leq t\leq s} \left|Z^N(t) - Z(t)\right|\right]\, ds\\
\end{align*}
Hence, the result.
\end{proof}

In order to treat the continuous scale, we decompose the process ass the sum of a martingale, a   process with finite variation, and a reminder. For all $r\in\RR$, we denote by $\tilde{\QQ}_r$ the compensated measure, defined as
\[\tilde{\QQ}_r(dsdu) = \QQ_r(dsdsu) - dsdu.\]
For all $t\geq 0$, we have
\begin{align*}
X^N(t)-X(t)
= &X^N_0-X_0 + \sum_{r\in\RR_C}\frac{h_i}{N}\int_{[0,t]\times\R_+}\ind_{u\leq N\lambda_r(Z^N(s^-))}\,\tilde{\QQ}_r(dsdu)\\
 +& \sum_{r\in\RR_D}\frac{h_r}{N}\int_{[0,t]\times\R_+}\ind_{u\leq \mu_r(Z^N(s^-))}\,\QQ_r(dsdu)\\
 +& \int_0^t F\left(Z^N(s)\right) - F\left(Z(s)\right)\, ds
\end{align*}
Let us denote
\[M^N_t = \sum_{r\in\RR_C}\frac{h_i}{N}\int_{[0,t]\times\R_+}\ind_{u\leq N\lambda_r(Z^N(s^-))}\,\tilde{\QQ}_r(dsdu)\]
and 
\[\gamma^N_t = \sum_{r\in\RR_D}\frac{h_r}{N}\int_{[0,t]\times\R_+}\ind_{u\leq \mu_r(Z^N(s^-))}\,\QQ_r(dsdu).\]
We show that these terms both converge to $0$, with explicit rate. For the first term,  the proof relies on  martingale arguments. We prove a convergence in $\L^1$, at rate $\sqrt{N}$.

\begin{lemme}\label{prop:Mart}
For all $T>0$ there exists $C_1>0$ such that
\[\E\left[\sup_{0\leq t\leq T}\left|M^N_t\right|\right] \leq \frac{C_1}{\sqrt{N}}.\]
\end{lemme}
\begin{proof}
For all $N\geq 1$, the process $M^N$ is a martingale, as 	a combination of stochastic integral. For all  $T>0$, we have
\[\left|M^N_T\right|^2\leq \frac{\sharp\RR_C}{N^2}\sum_{r\in\RR_C}|h_r|^2\left(\int_{[0,T]\times\R_+}\ind_{u\leq N\lambda_r(Z^N(s^-))}\,\tilde{\QQ}_r(dsdu)\right)^2.\]
Then, using  Ito's isometry formula, for all $r\in\RR_C$, we have
\begin{align*}
\E\left[\left(\int_{[0,T]\times\R_+}\ind_{u\leq N\lambda_r(Z^N(s^-))}\,\tilde{\QQ}_r(dsdu)\right)^2\right] 
=& \E\left[\int_{[0,T]\times\R_+}\ind^2_{u\leq N\lambda_r(Z^N(s^-))}\, dsdu\right]\\
=& N \E\left[\int_0^T \lambda_r(Z^N(s))\, ds\right]\\
\leq & LNT\\
\end{align*}
Therefore, we have
\[\E\left[\left|M^N_T\right|^2\right]\leq \frac{\sharp\RR_CLT\sum_{r\in\RR_C}|h_r|^2}{N}.\]
Moreover, Doob's inequality implies that
\[\E\left[\sup_{0\leq t\leq T} \left|M^N_t\right|\right] \leq 2\E\left[\left|M^N_T\right|^2\right]^{1/2}.\]
This ends the proof.
\end{proof}

The convergence of $(\gamma^N)_{N\geq 1}$ is more straightforward. 
\begin{lemme}\label{prop:eps}
For all $T\geq0$ there exists $C_2>0$ such that
\[\E\left[\sup_{s\in[0,T]}\left|\gamma^N_s\right|\right] \leq \frac{C_2}{N}.\]
\end{lemme}
\begin{proof}
For all $N\geq1$ and $0\leq t\leq T$, we have almost surely
\[\left|\gamma^N_t\right|\leq \frac{1}{N}\sum_{r\in\RR_D}|h_r|\int_{[0,t]\times\R_+}\ind_{u\leq L}\, \QQ_r(dsdu).\]
It follows that
\[\E\left[\sup_{0\leq t\leq T}\left|\gamma_t^N\right|\right]\leq \frac{LT}{N}\sum_{r\in\RR_D}|h_r|.\]
This concludes the proof.
\end{proof}

Now, we are able to state the convergence under the boundedness assumption \ref{ass:EZbound}.
\begin{theorem}\label{prop:LGN}
Assume that $(Z_0^N)_{N\geq 1}$ converges to $Z_0$ in $\L^1$, then, under Assumption \ref{ass:EZbound}, for all $T>0$, $(Z^N)_{\N\geq1}$ converges to $Z$ for the uniform topology on $\DD([0,T])$. 
Moreover, if $(\sqrt{N}\E\left[\left|Z^N_0-Z_0\right|\right])_{N\geq1}$ is bounded, then there exists $C(T)>0$ such that for all $N\geq 1$
\[\E\left[\sup_{0\leq t\leq T} \left|Z^N(t)-Z(t)\right|\right] \leq \frac{C(T)}{\sqrt{N}} .\]
\end{theorem}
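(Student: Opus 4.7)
The strategy is to combine the bound from the previous lemma for $Y^N - Y$ with a parallel bound for $X^N - X$ obtained from the decomposition introduced before Lemma~\ref{prop:Mart}, and then close the loop with Grönwall's inequality.

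First I would take the decomposition
\[X^N(t) - X(t) = (X_0^N - X_0) + M_t^N + \gamma_t^N + \int_0^t \bigl(F(Z^N(s)) - F(Z(s))\bigr)\, ds,\]
pass to $\sup_{0 \leq t' \leq t}$, and then to expectation. The first three contributions are controlled directly by the hypothesis on the initial condition and by Lemmas~\ref{prop:Mart} and~\ref{prop:eps}. For the integral term, bringing the supremum inside via $\sup_{t' \leq t} |\int_0^{t'}\!\cdot\, ds| \leq \int_0^t |\cdot|\, ds$, using the global Lipschitz property of $F$ with constant $C_C$, and bounding $|Z^N(s) - Z(s)| \leq \sup_{0 \leq u \leq s} |Z^N(u) - Z(u)|$, yields
\[\E\bigl[\sup_{t' \leq t}|X^N(t') - X(t')|\bigr] \leq \E[|X_0^N - X_0|] + \frac{C_1}{\sqrt{N}} + \frac{C_2}{N} + C_C \int_0^t \E\bigl[\sup_{u \leq s}|Z^N(u) - Z(u)|\bigr]\, ds.\]

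Combining this with the preceding lemma on $Y^N - Y$ and using $|Z^N - Z| = |X^N - X| + |Y^N - Y|$, one obtains the integral inequality
\[g_N(t) \leq \E[|Z_0^N - Z_0|] + \frac{C_1}{\sqrt{N}} + \frac{C_2}{N} + (C_C + C_D) \int_0^t g_N(s)\, ds,\]
where $g_N(t) := \E[\sup_{0 \leq u \leq t} |Z^N(u) - Z(u)|]$. Before invoking Grönwall one should verify that $g_N(T) < \infty$: this follows from Assumption~\ref{ass:standard}, which guarantees $\E[J_T^N], \E[J_T] < \infty$, together with the boundedness of the rates and the fixed jump vectors in Assumption~\ref{ass:EZbound}.

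Grönwall's lemma then produces
\[g_N(T) \leq \Bigl(\E[|Z_0^N - Z_0|] + \frac{C_1}{\sqrt{N}} + \frac{C_2}{N}\Bigr) e^{(C_C + C_D) T},\]
from which both statements follow. Under $L^1$ convergence of $Z_0^N$ alone the right-hand side tends to $0$, giving $\L^1$ (and hence in-probability) convergence of $\sup_{0 \leq t \leq T} |Z^N(t) - Z(t)|$ to $0$, which is the announced uniform convergence on $\DD([0,T])$. Under the sharper assumption that $\sqrt{N}\, \E[|Z_0^N - Z_0|]$ is bounded, the bracket is $O(1/\sqrt{N})$ and one reads off the claimed rate. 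The proof is essentially bookkeeping of the previous lemmas; the only mild subtlety is justifying the a priori finiteness of $g_N(T)$ needed to legitimately apply Grönwall's lemma.
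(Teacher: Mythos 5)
Your proof is correct and follows essentially the same route as the paper: the same decomposition of $X^N-X$ into initial condition, martingale, remainder and drift terms, combined with the lemma on the discrete scale and closed by Grönwall. The extra remark on verifying a priori finiteness of $g_N(T)$ before applying Grönwall is a sensible addition that the paper leaves implicit.
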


\begin{proof}
For all $0\leq t\leq T$, we have
\begin{align*}
\left|X^N(t)-X(t)\right|
&\leq \left|X^N_0-X_0\right| + \left|M^N_t\right|+ \left|\gamma^N_t\right| + \int_0^t\left|F(X^N(s), Y^N(s))- F(X(s), Y(s))\right|\, ds\\
&\leq \left|X^N_0-X_0\right| + \left|M^N_t\right|+ \left|\gamma^N_t\right| + C_C\int_0^T\left|Z^N(s)-Z(s)\right|\, ds .\\
\end{align*}
By combining the previous lemmata, we have
\begin{align*}
\E\left[\sup_{0\leq t\leq T} |Z^N_t-Z_t|\right] 
\leq & \E[|Z^N_0-Z_0|] +\frac{C_1}{\sqrt{N}} + \frac{C_2}{N}\\
&+ (C_C+C_D)\int_0^T\E\left[\sup_{0\leq t\leq s} |Z^N_t-Z_t|\right]\, ds\\
\end{align*}
By Gr\"owall's lemma, it implies that 
\[\E\left[\sup_{0\leq t\leq T} |Z^N_t-Z_t|\right] \leq \left(\E[|Z^N_0-Z_0|] +\frac{C_1}{\sqrt{N}} + \frac{C_2}{N}\right)e^{(C_C+C_D)T}.\]
Hence, the result.
\end{proof}

Our first main result is the convergence of $(Z^N)_{N\geq1}$, under the natural assumptions.

\begin{theorem}\label{prop:LGN-s}
Assume that $(Z_0^N)_{N\geq1}$ converges to $Z_0$ in $\L^1$. Then, under Assumption \ref{ass:standard}, for all $T>0$, the sequence $(Z^N)_{N\geq1}$ converges in probability to $Z$, for the uniform topology on $\DD([0,T])$.
\end{theorem}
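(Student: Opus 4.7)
The plan is a localisation argument in the spirit of \cite{CDMR}: reduce to the bounded-rate setting of Theorem \ref{prop:LGN} by truncating the rates outside a ball of radius $K$, apply that theorem to the truncated processes, and let $K\to\infty$. For each $K>0$, fix a Lipschitz cutoff $\chi_K:\R^n\times\N^d\to[0,1]$ with $\chi_K(z)=1$ for $|z|\leq K$ and $\chi_K(z)=0$ for $|z|\geq K+1$, and set $\lambda_r^K:=\chi_K\lambda_r$, $\mu_r^K:=\chi_K\mu_r$. Since each $\lambda_r,\mu_r$ is locally Lipschitz in the first variable and only finitely many values of $y$ lie in the support of $\chi_K$, the truncated rates are bounded and globally Lipschitz on $\R^n\times\N^d$, so Assumption \ref{ass:EZbound} applies to them. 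Construct $Z^{N,K}$ and $Z^K$ from the systems \eqref{eq:XN} and \eqref{eq:X} with the truncated rates, driven by the \emph{same} Poisson random measures $(\QQ_r)_r$ and the same initial data $Z_0^N,Z_0$ as $Z^N,Z$.

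Introduce the exit times $\tau_K^N:=\inf\{t\geq 0:|Z^N(t)|\geq K\}$ and $\tau_K:=\inf\{t\geq 0:|Z(t)|\geq K\}$. Because the original and truncated rates agree on $\{|z|<K\}$ and the systems are pathwise uniquely solvable when driven by the same Poisson measures starting from the same data, $Z^N\equiv Z^{N,K}$ on $[0,\tau_K^N)$ and $Z\equiv Z^K$ on $[0,\tau_K)$. Meanwhile Theorem \ref{prop:LGN} applied to the truncated rates yields $\E\bigl[\sup_{0\leq t\leq T}|Z^{N,K}(t)-Z^K(t)|\bigr]\to 0$ as $N\to\infty$ for each fixed $K$.

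The main difficulty is obtaining a uniform-in-$N$ bound on $\P(\tau_K^N\leq T)$, since no a priori growth estimate on $Z^N$ is available under Assumption \ref{ass:standard} alone. I resolve this by a bootstrap that feeds the $\L^1$ convergence of the truncated processes back into a bound on $\sup|Z^N|$. Given $\delta,\epsilon>0$, Assumption \ref{ass:standard} guarantees $\sup_{[0,T]}|Z|<\infty$ almost surely, so one chooses $K_0$ with $\P(\sup_{[0,T]}|Z|>K_0)<\delta/2$ and sets $K:=K_0+2$. On the event $A:=\{\sup_{[0,T]}|Z|\leq K_0\}$ one has $\tau_K>T$, hence $Z^K\equiv Z$ on $[0,T]$ and $\sup|Z^K|\leq K_0$; if moreover $\sup|Z^{N,K}-Z^K|<2$, then $\sup|Z^{N,K}|<K$, which forces $\tau_K^N>T$ and therefore $Z^N\equiv Z^{N,K}$ on $[0,T]$, so $\sup|Z^N-Z|=\sup|Z^{N,K}-Z^K|$ on this event.

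Collecting the inclusions,
\[
\P\!\left(\sup_{0\leq t\leq T}|Z^N(t)-Z(t)|>\epsilon\right)\;\leq\;\P(A^c)+\P\!\left(\sup_{0\leq t\leq T}|Z^{N,K}(t)-Z^K(t)|\geq \min(2,\epsilon)\right).
\]
The first term is below $\delta/2$ by the choice of $K_0$, and the second tends to $0$ as $N\to\infty$ by the preceding paragraph and Markov's inequality. Taking $N$ large enough yields the desired convergence in probability for the uniform topology on $\DD([0,T])$.
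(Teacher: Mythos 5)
Your proposal is correct and follows essentially the same route as the paper's proof: truncate the rates with a cutoff so that Theorem \ref{prop:LGN} applies, couple the truncated and original processes through the same Poisson random measures so that they coincide before exiting the ball where the rates agree, and use the almost-sure non-explosion of $Z$ under Assumption \ref{ass:standard} together with the convergence of the truncated pair to bootstrap a high-probability bound forcing $Z^N\equiv Z^{N,K}$ on $[0,T]$. The only differences are cosmetic (your radius $K_0+2$ and margin $2$ versus the paper's nested truncations at levels $k-1<k$ with margin $\varepsilon<1$, and working directly on $[0,T]$ rather than on $[0,T-\delta]$ and letting $\delta\to0$).
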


The proof leans on a rather classical truncation argument, well detailed in \cite{CDMR}. 
\begin{proof}
Let $\theta\in\CC^\infty(\R_+)$, supported on $[0,2]$ and such that $\theta(u)=1$ for all $u\in[0,1]$. For $k\geq1$ and $r\in\RR$, we define the truncated rates by
\[\lambda_r^k(x,y) = \theta(|(x,y)|/k)\lambda_r(x,y),\quad \mu_r^k(x,y) = \theta(|(x,y)|/k)\mu_r(x,y),\quad (x,y)\in\R^n\times\N^d.\]
We define the processes $Z^N_k$ and $Z_k$, solutions of the systems \eqref{eq:XN} and \eqref{eq:X}, with truncated rates. Under, Assumption \ref{ass:standard}, these processes fulfil Assumption \ref{ass:EZbound}. Hence,for all $k\geq1$, $(Z^N_k)_{N\geq1}$ converges in probability to $Z_k$ for the uniform topology on $\DD([0,T])$. Let us introduce the stopping times
\[\tau_k = \inf\left\{t\in[0,T], |Z_k(t)|\geq k\right\},\quad \tau_k^N = \inf\left\{t\in[0,T], |Z^N_k(t)|\geq k\right\},\]
with the convention $\inf\emptyset=T$. Then, on $[0,\tau_k[$ (respectively $[0,\tau^N_k[$), we have $Z_k(t) = Z(t)$ (respectively  $Z^N_k(t) = Z^N(t)$. Let us fix $0<\delta<T$ and $0<\varepsilon<1$. Let us remark that, on the event 
\[\left\{\tau_{k-1}>T-\delta\right\}\bigcap\left\{ \sup_{0\leq s\leq T-\delta} |Z^N_k(s)-Z_k(s)|<\varepsilon\right\}, \] 
we have \[\sup_{0\leq s\leq T-\delta} |Z^N_k(s)|<k.\]
It results that, on this event, $\tau^N_{k}>T-\delta$. Moreover, $\tau_{k}>\tau_{k-1}$. So on this event, we also have $\tau_{k}>T-\delta$, and the processes  $Z^N$ and $Z^N_k$ coincide with $Z^N_k$ and $Z_k$ respectively. Then, we have
\[\sup_{0\leq s\leq T-\delta} |Z^N(s)-Z(s)| <\varepsilon.\]
Therefore, we deduce that
\[\P\left(\sup_{0\leq s\leq T-\delta} |Z^N(s)-Z(s)|\geq\varepsilon\right)
\leq \P\left(\tau_{k-1}\leq T-\delta\right) + \P\left(\sup_{0\leq s\leq T-\delta} |Z^N_k(s)-Z_k(s)|\geq\varepsilon\right).\]

By Assumption \ref{ass:standard}, the process $Z$ can not explode on $[0,T-\delta]$, then the sequence $(\tau_k)_{k\geq 1}$ converges to $T$ almost surely and for $k$ large enough, the first term is arbitrary small. Using Theorem \ref{prop:LGN}, for a fixed $k$ and $N$ large enough, the second term is also arbitrary small. Thus, $(Z^N)_{N\geq1}$ converges in probability to $Z$, for the uniform topology on $\DD([0,T-\delta])$. As it is true for any $T$ and $\delta$, we have the result.
\end{proof}

Let us remark that with a cemetery point construction and without assumptions on the number of jumps $J^N_t$, we would have proved that for all $\varepsilon>0$
\[\lim_{N\to\infty} \P\left(T<\tau^N, \sup_{0\leq t\leq T}|Z^N(t)-Z(t)|<\varepsilon\right)=1.\]
To conclude with convergence results, let us remark that the convergence of $Y^N$ is much stronger: with large probability, the sequence $Y^N$ is stationary.
\begin{coro}\label{prop:yn}
Assume that $(Z_0^N)_{N\geq1}$ converges to $Z_0$ in $\L^1$. Under Assumption \ref{ass:standard}, for all $T>0$ and $\varepsilon>0$, there exists $N_0\geq 1$ such that for all $N\geq N_0$
\[\P\left(Y^N(s) = Y(s),\quad\forall 0\leq s\leq T\right) >1-\varepsilon.\]
\end{coro}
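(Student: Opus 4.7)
The plan is to exploit the integer-valued nature of both $Y^N$ and $Y$. Because both processes take values in $\N^d$, the difference $Y^N(t)-Y(t)$ lies in $\Z^d$, and therefore its Euclidean norm $|Y^N(t)-Y(t)|$ is either $0$ (when the two values coincide) or bounded below by $1$. This yields the exact equality of events
\[\left\{\sup_{0\leq s\leq T}|Y^N(s)-Y(s)| < 1\right\} = \left\{Y^N(s)=Y(s),\ \forall\, 0\leq s\leq T\right\},\]
so it suffices to show that the left-hand event has probability at least $1-\varepsilon$ for $N$ large enough.

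For this, I would simply invoke Theorem \ref{prop:LGN-s}, which gives convergence in probability of $Z^N$ to $Z$ for the uniform topology on $\DD([0,T])$. Because the product norm is $|z|=|x|+|y|$, one has the trivial pointwise bound $|Y^N(s)-Y(s)|\leq |Z^N(s)-Z(s)|$, and therefore
\[\sup_{0\leq s\leq T}|Y^N(s)-Y(s)| \;\leq\; \sup_{0\leq s\leq T}|Z^N(s)-Z(s)| \;\xrightarrow[N\to\infty]{\P}\; 0.\]
Applying this convergence with threshold $1$ produces, for any $\varepsilon>0$, an $N_0\geq 1$ such that the probability of the above supremum exceeding $1$ is smaller than $\varepsilon$ for all $N\geq N_0$, which is exactly the desired conclusion.

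I do not anticipate any real obstacle: the corollary is essentially a free consequence of strong (uniform) convergence combined with the discreteness of the second component. The only point worth emphasising is that one must work with a norm (such as the Euclidean norm employed in the paper) for which $|y|\geq 1$ on $\Z^d\setminus\{0\}$; this is precisely what distinguishes the discrete scale from the continuous scale and makes the stronger statement possible.
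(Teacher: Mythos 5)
Your proposal is correct and follows essentially the same route as the paper: the paper likewise uses the discreteness of $Y^N$ and $Y$ to identify the event $\{Y^N(s)=Y(s),\ \forall s\leq T\}$ with $\{\sup_s|Y^N(s)-Y(s)|\leq 1/2\}$ and then concludes from the convergence in probability of Theorem \ref{prop:LGN-s}. The only cosmetic difference is the threshold ($1$ versus $1/2$), which is immaterial.
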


\begin{proof}
As the processes $(Y^N)_{N\geq1}$ are discrete, we have
\[\P\left(\sup_{0\leq s\leq t}|Y^N(s)-Y(s)|\leq 1/2\right) = \P\left(Y^N(s)=Y(s),\quad\forall 0\leq s\leq t\right).\]
According to Theorem \ref{prop:LGN-s}, we have 
\[\lim_{N\to+\infty}\P\left(\sup_{0\leq s\leq t}|Y^N(s)-Y(s)|>1/2\right) = 0.\]
This concludes the proof.
\end{proof}

\section{Central limit theorem}
\label{sec:tcl}
The goal of this section is to study the fluctuations of the continuous scale and to prove a central limit result. In the previous section, we have proved that, whenever the sequence $(\sqrt{N}Z^N_0)_{N\geq1}$ is bounded, the sequence $\left(\sqrt{N}\left(Z^N(t)-Z(t)\right)\right)_{N\geq1}$ is bounded in $\L^1$. This suggests a central limit behaviour, with fluctuations of order $\sqrt{N}$. However, Corollary \ref{prop:yn} suggests that it is not relevant to study the fluctuations of the discrete scale. Let us define
\[V^N(t) = \sqrt{N}\left(X^N(t)-X(t)\right),\quad V^N_0 =\sqrt{N}\left(X^N_0-X_0\right),\]
the fluctuations of the continuous scale. We show that $(V^N)_N$ converges in distribution, for a weaker topology, and we characterise its limit. Our proof is inspired by the classical mono-scale case, as in \cite{EthKur}, adapted to our Poisson random measure representation and to the multiscale contribution. In order to do so, we need more regularity on the rates. As for Section \ref{sec:lgn}, we begin with strong boundedness assumptions, which can be relaxed latter, thanks to a truncation argument. Here on, we make the following assumptions.

\begin{ass}\label{ass:EZPZbound}
For all $r\in\RR$, the rates $\lambda_r$ and $\mu_r$ are bounded and globally Lipschitz continuous. Moreover, for all $r\in\RR_C$, the rate $\lambda_r$ have $\CC^2$-regularity with respect to the first variable, with bounded derivatives up to order two. We denote by $L>0$ a common upper bound of the rates, their Lipschitz constants and their derivatives.
\end{ass}

We denote by $\nabla_x F$ and $\Hess_xF$ the first and second order differential of $F$ with respect to its $\R^n$ coordinates. Let us notice that for all $z\in\R_+^n\times \N^d$, $\nabla_xF(z)$ is a linear application $\R^n\to\R^n$ and $\Hess_x F(z)$ is bilinear $\R^n\times\R^n\to\R^n$.

As for the convergence result, we decompose $V^N$ so as to highlight the martingale part, the drift part and the different reminders. The idea is to make
a Taylor expansion of $F$ appear.  For all $N\geq1$ and $t\geq0$, we have
\begin{align*}
V^N(t)
=& V^N_0 + \sqrt{N}M^N_t + \sqrt{N}\gamma^N_t + \sqrt{N}\int_0^t F\left(Z^N(s)\right) - F\left(Z(s)\right)\, ds\\
=& V^N_0 + U^N_t + \sqrt{N}\gamma^N_t +\int_0^t \left\langle\nabla_xF\left(Z^N(s)\right), V^N(s)\right\rangle\, ds + \zeta^N_t + \xi^N_t\\
\end{align*}
where 
\begin{align*}
U^{N}_t &= \sum_{r\in\RR_C}\frac{h_r}{\sqrt{N}}\int_{[0,t]\times\R_+}\ind_{u\leq N\lambda_r(Z^N(s))}\,\tilde{\QQ}_r(dsdu)\\
\zeta^N_t &= \sqrt{N}\int_0^t F\left(X^N(s),Y^N(s)\right) - F\left(X(s),Y^N(s)\right)-\left\langle\nabla_x F(Z^N(s),\frac{V^N(s)}{\sqrt{N}}\right\rangle\, ds\\
\xi^N_t &= \sqrt{N}\int_0^1F\left(X(s),Y^N(s)\right) - F\left(X(s),Y(s)\right)\, ds\\
\end{align*}
 
Let us note that we need to distinguish the behaviour of $(X^N)_{N\geq1}$ and $(Y^N)_{N\geq1}$ as the latter is never close to its limit without being equal to it. These different terms need to be treated through different strategies. On the one hand, $\sqrt{N}\gamma^N$ and $\zeta^N$ converge in a strong sense. For the former, it is a direct application of Lemma \ref{prop:eps}.  It yields that $\sqrt{N}\gamma^N$ converges to $0$ in $\L^1$ for the uniform topology. The sequence $(\zeta^N)_N$ only converges in probability, for the uniform topology.
\begin{lemme}
Assume that $(V^N_0)_N$ is bounded in $\L^1$, then $(\zeta^N)_N$ converges to $0$, in probability, for the uniform topology on $\DD([0,T])$. Moreover, there exists $C(T)>0$ such that for all $\varepsilon>0$
\[\P\left(\sup_{0\leq s\leq T}\left|\zeta^N_s\right|>\varepsilon\right) \leq \frac{C(T)}{\sqrt{\varepsilon} N^{1/4}}.\]
\end{lemme}
\begin{proof}
From Theorem \ref{prop:LGN}, the sequence $(V^N)_N$ is bounded in $\L^1$ for the uniform topology on $\DD([0,T])$ : there exists $c>0$ such that
\[ \E\left[\sup_{0\leq s\leq t} \left|V^N(s)\right|\right]\leq c.\]
On the other hand, from Taylor formula, for all $0\leq t\leq T$, we have
\begin{align*}
\zeta^N_t
=& \sqrt{N}\int_0^t F\left(X^N(s),Y^N(s)\right) - F\left(X(s),Y^N(s)\right)-\left\langle\nabla_x F(X^N(s),Y^N(s)),\frac{V^N(s)}{\sqrt{N}}\right\rangle\, ds\\
=& \sqrt{N}\int_0^t\int_0^1 (1-u)\Hess_x F\left(X^N_s -u\frac{V^N(s)}{\sqrt{N}}, Y^N_s\right)\left(\frac{V^N(s)}{\sqrt{N}},\frac{V^N(s)}{\sqrt{N}}\right)\, du\, ds\\
\end{align*}
Thus, there exists $C>0$, such that, for all $0\leq t\leq T$
\[\left|\zeta^N_t\right|\leq \frac{C}{\sqrt{N}}\sup_{0\leq s\leq T}|V^N(s)|^2.\]
Hence, from Markov's inequality, for all $\alpha>0$, we have
\[\P\left(\sup_{0\leq t\leq T}\left|\zeta^N_t\right|>\varepsilon\right)\leq \frac{c\sqrt{C}}{\sqrt{\varepsilon}N^{1/4}}\underset{N\to+\infty}{\longrightarrow} 0.\]
This ends the proof.
\end{proof}

On the other hand, the convergence of the processes $U^N$ and $\xi^N$ is, essentially, a convergences in distribution only. The key argument, to obtain this convergence, is the tightness. In the case of $\xi^N$, it results from Arzel\`a-Ascoli theorem directly. This term is specific to our multiscale model. It describes the contribution of the discrete scale on $V^N$. As we remarked, the sequence $(Y^N)_N$ is stationary with large probability. This suggests that $\xi^N$ does not have any contribution. This allows lifting the convergence in distribution to a convergence in probability.

\begin{lemme}
Assume that $(V_0^N)_N$ is bounded, then, for all $T>0$ the sequence $\xi^N$ converges in probability, for the uniform topology on $\DD([0,T])$, to $0$.
\end{lemme} 
 
\begin{proof}
Firstly, the sequence $(\xi^N)_{N\geq 1}$ is tight in $\CC([0,T])$. Indeed, for all $N$, $\xi^N$ is differentiable and we have

\begin{align*}
\|\xi^N\|_{Lip,[0,T]}
:=& \sup_{0\leq s\leq T}\left|\xi^N_s\right| + \sup_{0\leq s\leq T}\left|{\xi^N_s}'\right|\\
\leq & \sqrt{N}\int_0^T|F(X(s), Y^N(s)) -F(X(s), Y(s))|\, ds\\ 
& + \sqrt{N}\sup_{0\leq s\leq T}|F(X(s), Y^N(s)) -F(X(s), Y(s))|\\
\leq & (1+T)\sqrt{N}\sup_{0\leq s\leq T}|F(X(s), Y^N(s)) -F(X(s), Y(s))|\\
\leq & (1+T)C_C\sqrt{N}\sup_{0\leq s\leq T}|Z^N(s)-Z(s)|
\end{align*}

Besides, from Theorem \ref{prop:LGN}, $(\sqrt{N}(Z_N-Z))_{N\geq1}$ is bounded in $\L^1$ for the uniform topology on $\DD([0,T])$. So, there exist a constant $C>0$ such that
\[\E\left[\|\xi^N\|_{Lip,[0,T]}\right]\leq C.\]
For all $\varepsilon>0$, we define $K_\varepsilon$ the $\|\cdot\|_{Lip,[0,T]}$-ball of $\CC([0,T])$, of radius $C/\varepsilon$.  Hence, for all $\varepsilon>0$, we have 
\[\P\left(\xi^N\in K_\varepsilon\right)\geq 1-\varepsilon.\]
From Arzel\`a-Ascoli theorem, $K_\varepsilon$ is a compact of $\CC([0,T])$, endowed with the uniform topology. Therefore, $(\xi^N)_N$ is $\CC$-tight and admits limit points for the uniform convergence. Let $\xi$ be such a limit point. According to Skorokhod's representation theorem on $\CC([0,T])$ (see \cite[Theorem 6.7]{Bill}), up to extraction, we can assume that $(\xi^N)_N$ and $(Y^N)_N$ converge to $\xi$ and $Y$ almost surely, for the uniform topology. Hence, according to Corollary \ref{prop:yn}, there exists $N_0$ such that for all $N\geq N_0$, $\sup_{[0,t]}|Y^N(s)-Y(s)|=0$ and $\xi^N=0$. Then, $(\xi^N)_N$ admits a unique limit point, $0$, and converges in distribution, and so in probability, to this unique limit point, for the uniform topology. 
\end{proof}

Finally, the tightness of $(U^N)_{N\geq 1}$ and $(V^N){N\geq 1}$ requires a more involved argument, Aldous' criterion, which ensure the convergence for the Skorokhod topology only.

\begin{lemme}{Aldous' criterion \cite[Theorem VI$4.5$]{JacShi}}
Let $(H^N)_N$ be a sequence of càdlàg adapted processes, defined on $[0,t]$ and $\R^n$-valued. Assume that it satisfies
\begin{enumerate}
\item[(i)] For all $\varepsilon>0$, there are $N_0\in\N^*$ and $K\in\R$ with
\[N\geq N_0 \Rightarrow \P\left(\sup_{0\leq s\leq t}|H^N_s|>K\right) \leq \varepsilon.\]
\item[(ii)] For all $\varepsilon,\eta>0$, there exists $\delta_0>0$ and $N_0\in\N^*$ such that for all $N\geq N_0$, for all stopping times $\delta$ and $\tau$ such that $\tau+\delta\leq t$ and $\delta\leq \delta_0$ a.s
\[\P\left(|H^N_{\tau+\delta} - H^N_\tau|\geq\varepsilon\right)\leq\eta.\] 
\end{enumerate}
Then the sequence $(H^N)_N$ is tight for the Skorokhod topology.
\end{lemme}

\begin{lemme}
For all $T>0$, $(U^N)_{N\geq 1}$ converges in distribution for the uniform topology on $\DD([0,T])$. Moreover, its limit is the law of a process
\[U_t=\int_0^t\sigma_s\, dB_s,\] 
where $B$ is a $\sharp\RR_C$-dimensional Brownian motions and $\sigma\in\MM_{n,\sharp\RR_C}$ satisfies
\[(\sigma_t)_{ir} = h^i_r\lambda_r^{1/2}(Z(t)).\]  
\end{lemme}

\begin{proof}
This proof is divided in three steps : tightness of the sequence, through Aldous' criterion, continuity of limit points and uniform convergence, and uniqueness. 

\textbf{Tightness.} From Lemma \ref{prop:Mart}, there exists $C>0$ such that for all $N\geq1$
\[ \E\left[\sup_{0\leq s\leq T} |U^{N}_s|\right]\leq C.\]
Then, from Markov's inequality, $(U^{N})_N$ satisfies the condition $(i)$. Let $\tau$ and $\delta$ be two finite stopping times. We have
\[\E\left[\left|U^{N}_\tau - U^{N}_{\tau+\delta}\right|^2\right]
= \sharp\RR_C \sum_{r\in\RR_C}|h_r|^2\E\left[\int_\tau^{\tau+\delta}\lambda_r(X^N(s), Y^N(s))\, ds\right]\leq C_C^2/L \E[\delta].\]
Then for all $\varepsilon>0$,  $m>0$ and $\eta>0$, we set $\delta_0=\eta\varepsilon^2 L/C_C^2$. For all stopping times $\tau\leq m$ and $\delta\leq \delta_0$,
\[\P\left(|U^{N}_\tau-U^{N}_{\tau+\delta}|\geq\varepsilon\right)\leq\eta.\]
Thus, according to Aldous' criterion, the sequence $(U^{N})_{N\geq1}$ is tight. Thanks to Prokhorov's theorem, the sequence $(U^{N})_{N\geq1}$ admits limit points.

\textbf{Continuity.} The jumps of $U^N$ are uniformly bounded and their sizes converge to $0$. Indeed, for all $N\geq1$ and all $0\leq s\leq t$, we have 
\[|\Delta U^{N}_s|\leq\frac{\sup_{r\in\RR_C}|h_r|}{\sqrt{N}}.\] 
Thus, for all $\varepsilon>0$, we have
\[\lim_{N\to\infty}\P\left(\sup_{0\leq s\leq T} |\Delta U^N_s| \geq\varepsilon\right)=0.\]
According to the $\CC$-tightness criterion \cite[Proposition VI$3.26$]{JacShi}, any limit distribution of $(U^{N})_N$ is the law of a continuous process. As $[0,T]$ is compact, limit points of $(U^N)_{N\geq1}$ are uniformly continuous and the convergence for the Skorokhod topology implies for the uniform convergence.
 
\textbf{Uniqueness.} Let $U$ be a limit point of $(U^{N})_{N\geq1}$ for the uniform convergence. Up to taking a subsequence, let us assume that $(U^{N})_{N\geq1}$ converges in distribution to $U$. As $(U^{N})_{N\geq1}$ is a sequence of martingale, then  $U$ is a continuous martingale. Moreover, as the jumps sizes converge to $0$, the convergence of $(U^N)_{N\geq1}$ implies the joint convergence, in distribution, of its quadratic variation $([(U^{N})^i,(U^{N})^j])_{1\leq i,j\leq n}$ to the quadratic variation of $U$. (see \cite[Corollary VI.6.6]{JacShi}). Let us denote the $\R$-valued processes, for $N\geq1$ and $r\in\RR_C$
\[H^{r,N}_t = \int_{[0,t]\times\R_+}\ind_{u\leq N\lambda_r(Z^N(s^-))}\,\tilde{\QQ}_r(dsdu).\]
Let us remark that for all $r\neq\rho\in\RR_C$, as $\tilde{\QQ}_r$ and $\tilde{\QQ}_\rho$ are independent, $[ H^{r,N},H^{\rho,N}]_t = 0$.
Then, for all $1\leq i,j\leq n$, the quadratic variations $[(U^{N})^i,(U^{N})^j]_t$ write as
\begin{align*}
[(U^{N})^i,(U^{N})^j]_t
=& \sum_{r,\rho\in\RR_C} \frac{h^i_r h^j_\rho}{N} [H^{r,N},H^{\rho,N}]_t\\
=& \sum_{r\in\RR_C} \frac{h^i_r h^j_r}{N} [H^{r,N},H^{r,N}]_t\\
\end{align*}
For each $N\geq1$ and $r\in\RR_C$, $H^{r,N}$ are compensated Poisson processes, and we have
\[[ H^{r,N},H^{r,N}]_t = \sum_{s\leq t}(\Delta H^{r,N}_s)^2= \sum_{s\leq t}\Delta H^{r,N}_s = H^{r,N}_t + N\int_0^t\lambda_r(Z^N(s))\, ds.\]
Then, from the convergence of $(Z^N)_{N\geq 1}$, and from the boundedness assumption on $\lambda_r$, we have the following convergences, in probability, for the uniform topology
\[\lim_{N\to\infty}\frac{H^{r,N}}{N} = 0,\quad\text{and}\quad \lim_{N\to\infty} \int_0^t\lambda_r(Z^N(s))\, ds = \int_0^t\lambda_r(Z(s))\, ds.\]
It follows that the quadratic variation of $U$ satisfies
\[[ U^i,U^j]_t = \sum_{r\in\RR_C} h^i_r h^j_r \int_0^t\lambda_r(Z(s))\, ds.\]
Therefore, the unique limit point of $(U^N)_{N\geq1}$, for the convergence in distribution, for the uniform topology, is the Gaussian martingale with quadratic variation  
\[[ U^i,U^j]_t = \sum_{r\in\RR_C} h^i_r h^j_r \int_0^t\lambda_r(Z(s))\, ds,\]
and so, $(U^N)_{N\geq1}$ converges to this martingale, for the uniform topology on $\DD([0,T)]$.
To conclude, we have the decomposition
\[d[ U^i,U^j]_t= \sum_{r\in\RR_C} h^i_r h^j_r \lambda_r(Z(t)) dt = (\sigma_t \sigma^*_t)_{ij}dt,\] 
with $(\sigma_t)_{ir} = h^i_r\lambda_r^{1/2}(Z(t))$ for $1\leq i\leq n$ and $r\in\RR_C$.
Then, using a martingale representation theorem (see \cite[Theorem II.7.1]{IW} for example), up to an enlargement of the probability space, there exists a $\sharp\RR_C$-dimensional Brownian motion $B$, such that
\[U_t = \int_0^t\sigma_s\, dB_s.\]
\end{proof}

Let us note that $U$ can also be represented as a combination of independent Brownian motions with random clocks 
\[U_t = \sum_{r\in\RR_C} h_r B^r_{\theta_r(t)},\quad \theta_r(t) = \int_0^t\lambda_r(Z(s))\, ds.\]
Now, under the boundedness assumption \ref{ass:EZPZbound}, we can prove the convergence of the fluctuation process  $V^N$.

\begin{theorem}\label{prop:TCL}
Let us assume that $(V^N_0)_{N\geq 1}$ is bounded in $\L^1$ and converges in distribution to $V_0$. Then, under Assumptions \ref{ass:EZPZbound}, $(V^N)_{N\geq 1}$ converges in distribution, for the uniform topology on $\DD([0,T])$, to the law of $V$, the solution to the SDE
\begin{equation}\label{eq:V}
dV(t) = \sigma_t dB_t +\left\langle\nabla_xF\left(Z(t)\right), V(t)\right\rangle\, dt.
\end{equation}
\end{theorem}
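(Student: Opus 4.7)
The plan is to exploit the semimartingale decomposition introduced just before the statement,
\[
V^N(t)=V^N_0+U^N_t+\sqrt{N}\gamma^N_t+\int_0^t\langle\nabla_xF(Z^N(s)),V^N_s\rangle\, ds+\zeta^N_t+\xi^N_t,
\]
and to combine it with the partial convergences already established in this section: $U^N$ converges in distribution to $U=\int_0^{\cdot}\sigma_s\, dB_s$, $\sqrt{N}\gamma^N\to 0$ in $\L^1$ uniformly (by Lemma \ref{prop:eps}), $\zeta^N\to 0$ in probability uniformly, and $\xi^N\to 0$ in probability for the Skorokhod topology. Theorem \ref{prop:LGN} also provides a uniform bound $\E[\sup_{s\le T}|V^N_s|]\le c$ on which the tightness argument will rest.

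\textbf{Tightness.} I would first verify Aldous' criterion for $(V^N)_{N\ge 1}$. Condition (i) follows immediately from Markov's inequality applied to the $\L^1$ bound above. For condition (ii), at stopping times $\tau$ and $\delta$ with $\tau+\delta\le T$ and $\delta\le\delta_0$, each contribution to $V^N_{\tau+\delta}-V^N_\tau$ is handled separately: the martingale-type term $U^N$ already satisfies Aldous' criterion, the drift is bounded by a constant times $\delta\,\sup_{s\le T}|V^N_s|$ thanks to the boundedness of $\nabla_xF$ granted by Assumption \ref{ass:EZPZbound}, and the three remaining terms are uniformly small in $N$. To upgrade tightness to $C$-tightness I would observe that the jumps of $V^N=\sqrt{N}(X^N-X)$ have size at most $\max_{r\in\RR}|h_r|/\sqrt{N}$, since $X$ is continuous and the jumps of $X^N$ are of size $|h_r|/N$; hence $\sup_{s\le T}|\Delta V^N_s|\to 0$ in probability and \cite[Proposition VI.3.26]{JacShi} ensures that every limit point is the law of a continuous process.

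\textbf{Identification of the limit.} Given a limit point $V$, I would extract a subsequence along which $(V^N_0,U^N,Z^N,V^N)$ converges jointly in distribution to $(V_0,U,Z,V)$ and invoke Skorokhod's representation theorem to realise the convergences almost surely. Since $V$ and $U$ are continuous, the Skorokhod convergences become uniform, and by Theorem \ref{prop:LGN} together with Corollary \ref{prop:yn} so does $Z^N\to Z$. Passing to the limit in each term of the decomposition is then routine, except for the drift integral, which I would split as
\[
\int_0^t\bigl\langle\nabla_xF(Z^N(s))-\nabla_xF(Z(s)),V^N_s\bigr\rangle\, ds+\int_0^t\bigl\langle\nabla_xF(Z(s)),V^N_s-V_s\bigr\rangle\, ds,
\]
treating the first summand via the Lipschitz continuity of $\nabla_xF$ (Assumption \ref{ass:EZPZbound}) together with the $\L^1$ bound on $V^N$, and the second by bounded convergence. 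The limit $V$ therefore satisfies the integral form of \eqref{eq:V} driven by the Brownian motion $B$ attached to $U$.

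\textbf{Uniqueness and conclusion.} The SDE \eqref{eq:V} is linear in $V$ with bounded coefficients $\sigma$ and $\nabla_xF(Z(\cdot))$, so a standard Grönwall argument on the pathwise difference of two solutions driven by the same Brownian motion yields strong uniqueness. Hence $(V^N)_{N\ge 1}$ has a single limit point in distribution, namely the law of the solution to \eqref{eq:V}. The main obstacle is the passage to the limit in the drift integral: it is crucial that the limit $V$ is continuous so that the Skorokhod convergence of $V^N$ upgrades to uniform convergence almost surely, which is exactly why the jump-size estimate of order $1/\sqrt{N}$ is recorded during the tightness step.
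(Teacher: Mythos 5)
Your proposal is correct, and the tightness step (Aldous' criterion applied to each term of the decomposition) coincides with the paper's. Where you genuinely diverge is in the identification of the limit. The paper never establishes $\CC$-tightness of $(V^N)_N$ itself; instead it extracts an almost surely Skorokhod-convergent subsequence of $\tilde{U}^N=V^N_0+U^N+\sqrt{N}\gamma^N+\zeta^N+\xi^N$ and then runs a Gr\"onwall argument on $\sup_{u\leq s}|V^N_{\lambda_u}-V_u|$ for the associated time change $\lambda$; the price is a delicate control of $\int_0^s|\nabla_xF(X(\lambda_u),Y(\lambda_u))-\nabla_xF(X(u),Y(u))|\,du$, exploiting that $Y$ is piecewise constant with a Poisson-bounded number of jumps, each displaced by at most $\varepsilon t$ under $\lambda$ --- this is exactly the ``product is not preserved by Skorokhod convergence'' difficulty the paper emphasises. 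You bypass that entirely by observing that $X$ is continuous and the jumps of $X^N$ have size $|h_r|/N$, so $\sup_s|\Delta V^N_s|\leq\max_r|h_r|/\sqrt{N}$, whence \cite[Proposition VI.3.26]{JacShi} (the same criterion the paper already invokes for $U^N$) upgrades tightness to $\CC$-tightness; Skorokhod convergence to a continuous limit is uniform convergence, no time change appears, and the drift passes to the limit term by term. This is a real simplification and, as far as I can see, sound. Two small points to make explicit in a full write-up: (a) for the summand $\int_0^t\langle\nabla_xF(Z^N(s))-\nabla_xF(Z(s)),V^N_s\rangle\,ds$, the Lipschitz bound in $x$ produces a term of order $N^{-1/2}\sup_s|V^N_s|^2$, which is not controlled by the $\L^1$ bound on $\sup_s|V^N_s|$ alone but does go to $0$ in probability at rate $N^{-1/4}$ (this is exactly the paper's $\zeta^N$ lemma), and the $y$-argument must be handled separately via Corollary \ref{prop:yn} since $\nabla_xF(x,\cdot)$ has no continuity in $y$; (b) the uniqueness-of-the-limit-point step implicitly fixes the joint law of $(V_0,B,Z)$, a point the paper's argument also leaves tacit. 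Neither affects the validity of your approach.
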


\begin{proof}
Firstly we show that the sequence $(V^N)_N$ is tight. Indeed, from Theorem \ref{prop:LGN}, $(V^N)_N$ is bounded in $\L^1$ and so, from Markov's inequality, it satisfies the boundedness condition $(i)$ of Aldous' criterion.  Moreover, let $\tau$ and $\delta$ be two stopping times such that $\tau+\delta\leq t$, we have
\begin{align*}
|V^N({\tau+\delta})-V^N(\tau)|
\leq & |U^N_{\tau+\delta} -U^N_\tau|+2\sqrt{N}\sup_{0\leq s\leq t}|\gamma^N_s| +\sup_{0\leq s\leq t}|\zeta^N_s| + C_C\int_\tau^{\tau+\delta}|V^N(s)|\, ds\\
 &+ \int_\tau^{\tau+\delta} \left|F\left(X(s), Y^N(s)\right)-F\left(X(s), Y(s)\right)\right|\, ds\\
\leq & |U^N_{\tau+\delta} - U^N_\tau|+2\sqrt{N}\sup_{0\leq s\leq t}|\gamma^N_s| + \sup_{0\leq s\leq t}|\zeta^N_s|+ C_C\delta\sup_{0\leq s\leq t}|V^N(s)|\\
& +2C_C\delta\\
\end{align*}
Then, for all $\alpha>0$, we have
\begin{align*}
\P\left(|V^N({\tau+\delta})-V^N(\tau)|\geq\varepsilon\right)
\leq& \frac{\E\left[ |U^N_{\tau+\delta} -U^N_\tau|^2\right]}{\varepsilon^2} + 2\frac{\E\left[\sqrt{N}\sup_{0\leq s\leq t}|\gamma^N_s|\right]}{\varepsilon}\\
& + \P\left(\sup_{0\leq s\leq t}|\zeta^N_s|\geq\varepsilon\right) +C_C\frac{\|\delta\|_{\L^\infty}\E\left[\sup_{0\leq s\leq t}|V^N(s)|\right]}{\varepsilon}\\
& + 2C_c\P(\delta\geq\varepsilon)\\
\leq& \frac{L_C\|\delta\|_{\L^\infty}}{\varepsilon^2} + \frac{2C}{\sqrt{N}\varepsilon} + \frac{C}{\sqrt{\varepsilon}N^{1/4}} + \frac{C_C\|\delta\|_{\L^\infty}}{\varepsilon} +\frac{2C_C \|\delta\|_{\L^\infty}}{\varepsilon}\\
\leq& \frac{C\|\delta\|_{\L^\infty}}{\varepsilon} + \frac{C}{\sqrt{\varepsilon}N^{1/4}}\\
\end{align*}
for some $C>0$. Then, for all $\varepsilon,\eta>0$, there exist $\delta_0 = \frac{\eta\varepsilon}{2C}$ and $N_0 = \frac{16C^4}{\varepsilon^2\eta^4}$ such that for all $N\geq N_0$, for all stopping time $\tau, \delta$ with $\delta\leq \delta_0$ a.s and $\tau+\delta\leq t$ a.s
\[\P\left(|V^N({\tau+\delta})-V^N(\tau)|\geq\varepsilon\right)\leq \eta.\]
According to Aldous' criterion, the sequence $(V^N)_{N\geq 1}$ is tight. This means that it admits limit points and up to extraction, we can assume that $(V^N)_{N\geq 1}$ converges almost surely for the Skorokhod topology. Using the previous lemmata, $(U^N)_{N\geq 1}$, $(\sqrt{N}\gamma^N)_{N\geq 1}$, $(\zeta^N)_{N\geq 1}$ and $(\xi^N)_{N\geq 1}$ converge for the uniform topology.  Hence, up to a new extraction, we can assume that 
\[\bar{U}^N = V^N_0 + U^N+\sqrt{N}\gamma^N+\zeta^N+\xi^N\]
converges almost surely to $\bar{U} = V_0 + U$, and  $(Z^N)_{N\geq 1}$ converges almost surely to $Z$, for the uniform topology. We show that $(V^N)_{N\geq 1}$ converges to $V$ almost surely, for the uniform topology. Indeed, for  all $t\leq T$, we have
\begin{align*}
V^N({t}) - V(t)
= & \bar{U}^N_{t}-\bar{U}_t + \int_0^{t} \left\langle\nabla_x F\left(Z^N(s)\right),V^N(s)-V(s) \right\rangle\, ds\\ 
& + \int_0^t\left\langle\nabla_x F\left(Z^N(s)\right) -\nabla_x F\left(Z(s)\right),V(s)\right\rangle\, ds\\
\sup_{0\leq\sigma\leq t} |V^N({\sigma}) - V(\sigma)|
\leq & \sup_{0\leq\sigma\leq T} |\bar{U}^N_{\sigma} - \bar{U}_\sigma| + C_C\int_0^t\sup_{0\leq\sigma\leq s} |V^N({\sigma}) - V(\sigma)|\, ds\\
& + C_CT\sup_{0\leq\sigma\leq T} |V(\sigma)|\sup_{0\leq\sigma\leq T} |Z^N({\sigma}) - Z(\sigma)|\\
\leq & C(T)\left(\sup_{0\leq\sigma\leq T} |\bar{U}^N_{\sigma} - \bar{U}_\sigma| + \sup_{0\leq\sigma\leq T} |V(\sigma)|\sup_{0\leq\sigma\leq T} |Z^N({\sigma}) - Z(\sigma)|\right)\\
\end{align*}
where the last inequality follows from Gr\"onwall's lemma, and $C>0$ is deterministic. Almost surely, $V$ is continuous on $[0,T]$ and so $\sup_{0\leq\sigma\leq T} |V_\sigma|$ is finite. Then, we have proved that any convergent subsequence of $(V^N)_{N\geq1}$, for the Skorokhod topology, converges to $V$ for the uniform topology. It follows that $(V^N)_{N\geq1}$ converges to $V$, for the uniform topology on $\DD\left([0,T]\right)$.
\end{proof}

Let us remark that this proof would have been more tedious is we were not able to obtain a convergence for the uniform topology, in the previous lemmata. Indeed, products and sums are not continuous for the Skorokhod topology. In particular, the drift term
\[\int_0^t\left\langle \nabla_xF\left(Z^N(s)\right), V^N(s)\right\rangle\, ds\]
would have required some careful control of the number of jumps on the discrete scale. 

To conclude, we apply the truncation argument to prove a CLT under Assumptions \ref{ass:standard}.

\begin{theorem}\label{prop:TCL-s}
Let us assume that for all $r\in\RR_C$, the rates $\lambda_r$ have $\CC^2$ regularity, and that $(V^N_0)_{N\geq 1}$ is bounded in $\L^1$ and converges in distribution to $V_0$. Then, under Assumptions \ref{ass:standard}, $(V^N)_N$ converges in distribution, for the uniform topology on $\DD([0,T])$, to $V$.
\end{theorem}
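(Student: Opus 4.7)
The plan is to mirror the truncation strategy used in the proof of Theorem~\ref{prop:LGN-s}, but with the added care that Assumption~\ref{ass:EZPZbound} requires $\mathcal{C}^2$ regularity of the continuous rates, not merely Lipschitz continuity. First I would fix a cutoff $\theta\in\CC^\infty(\R_+)$ supported in $[0,2]$ with $\theta\equiv 1$ on $[0,1]$, and for each $k\geq 1$ define truncated rates $\lambda_r^k(x,y)=\theta(|(x,y)|/k)\lambda_r(x,y)$ and analogously for $\mu_r^k$. Because $\theta$ is smooth and $\lambda_r\in\CC^2$ on the ball $\{|z|\leq 2k\}$ by hypothesis, the truncated rates are globally bounded, globally Lipschitz, and $\CC^2$ with bounded derivatives up to order two, so Assumption~\ref{ass:EZPZbound} holds for the truncated system. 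Let $Z_k^N,Z_k,V_k^N,V_k$ denote the associated processes, where $V_k$ solves \eqref{eq:V} with $\nabla_xF$ and $\sigma$ replaced by their truncated versions.

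Second, Theorem~\ref{prop:TCL} applied to the truncated system yields that $(V_k^N)_{N\geq 1}$ converges in distribution, for the Skorokhod topology on $\DD([0,T])$, to $V_k$. I would then introduce the stopping times
\[\tau_k=\inf\{t\in[0,T]:|Z(t)|\geq k\},\qquad \tau_k^N=\inf\{t\in[0,T]:|Z^N(t)|\geq k\},\]
so that on $\{\tau_{k-1}>T\}$ the paths of $Z$ and $Z_k$ (hence $V$ and $V_k$) coincide on $[0,T]$, and analogously $V^N=V_k^N$ on $\{\tau_{k-1}^N>T\}$. Under Assumption~\ref{ass:standard} the process $Z$ does not explode, so $\tau_k\to T$ almost surely, which gives $\P(\tau_{k-1}\leq T)\to 0$ as $k\to\infty$. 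Combining this with the convergence in probability of $Z^N$ to $Z$ from Theorem~\ref{prop:LGN-s}, one gets $\limsup_{N\to\infty}\P(\tau^N_{k-1}\leq T)\leq \P(\tau_{k-2}\leq T)\to 0$.

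Third, for any bounded continuous functional $\Phi:\DD([0,T])\to\R$ (in the Skorokhod topology), I would write
\[\bigl|\E[\Phi(V^N)]-\E[\Phi(V)]\bigr|\leq \bigl|\E[\Phi(V_k^N)]-\E[\Phi(V_k)]\bigr|+2\|\Phi\|_\infty\bigl(\P(\tau^N_{k-1}\leq T)+\P(\tau_{k-1}\leq T)\bigr),\]
and let $N\to\infty$ first (using Theorem~\ref{prop:TCL} for the truncated processes) and then $k\to\infty$, yielding $\E[\Phi(V^N)]\to\E[\Phi(V)]$ as required.

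The main obstacle is the third step: unlike the situation in Theorem~\ref{prop:LGN-s} where convergence in probability for the uniform topology composes naturally with pathwise identification on stopping-time events, here we only have convergence in distribution on the Skorokhod space. The subtlety is that $V$ itself must be well-defined under Assumption~\ref{ass:standard} and must coincide with $V_k$ on $\{\tau_{k-1}>T\}$; this needs a brief verification that the SDE~\eqref{eq:V} is well-posed pathwise up to the exit time $\tau_k$ (which is clear since $\nabla_xF(Z(\cdot))$ is locally bounded and measurable on $[0,\tau_k[$, so standard linear SDE theory applies). Once the processes are consistently identified across values of $k$, the bounded-functional decomposition above closes the argument.
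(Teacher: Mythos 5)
Your proposal is correct, but the final step departs from the paper's route in an interesting way. The paper, after the same truncation of the rates and the same appeal to Theorem~\ref{prop:TCL} for the truncated system, establishes the convergence of $(V^N)_N$ in two stages: it first proves tightness of $(V^N)_N$ by transferring both conditions of Aldous' criterion from $(V^N_k)_N$ to $(V^N)_N$ on the event $\{\tau_{k-1}>T-\delta\}\cap\{\sup_{s\le T-\delta}|Z^N_k(s)-Z_k(s)|\le\varepsilon\}$, and then identifies the unique limit point by bounding $\P\bigl(d_{\DD([0,T-\delta])}(V^N,V)\ge\varepsilon\bigr)$ by the corresponding quantity for the truncated processes plus the probabilities of the bad events. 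You instead bypass tightness entirely: for a bounded continuous functional $\Phi$ you estimate $\bigl|\E[\Phi(V^N)]-\E[\Phi(V)]\bigr|$ directly by $\bigl|\E[\Phi(V^N_k)]-\E[\Phi(V_k)]\bigr|$ plus $2\|\Phi\|_\infty$ times the probabilities that the processes fail to coincide with their truncations, and take $N\to\infty$ then $k\to\infty$. Since this gives convergence of $\E[\Phi(V^N)]$ for every bounded continuous $\Phi$, it is weak convergence by definition, and the argument is both shorter and more elementary than the Aldous-plus-uniqueness scheme. The one point you rightly flag, and should make explicit, is the consistent pathwise identification $V=V_k$ on $\{\tau_{k-1}>T\}$: this requires constructing all the $V_k$ and $V$ on a common probability space with the \emph{same} driving Brownian motion $B$ (the coefficients $\sigma_t$ and $\nabla_xF(Z(t))$ being functionals of $Z$, and the truncated coefficients agreeing with the original ones on the ball of radius $k-1$), after which the linearity of \eqref{eq:V} gives pathwise agreement before the exit time. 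The paper needs and implicitly uses the same identification, so this is not a gap specific to your approach. Two cosmetic remarks: your constant in front of $\P(\tau_{k-1}\le T)$ should account for both the replacement of $V^N$ by $V^N_k$ and of $V$ by $V_k$, and your $\limsup_N\P(\tau^N_{k-1}\le T)\le\P(\tau_{k-2}\le T)$ step should cite the uniform convergence in probability of $Z^N$ to $Z$ from Theorem~\ref{prop:LGN-s}, exactly as you do.
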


\begin{proof}
Firstly, let us remark that under Assumption \ref{ass:standard}, equation \eqref{eq:V} has a unique solution. Indeed, it is linear, with, almost surely, finite coefficients. Then, we introduce the truncated fluctuations $V^N_k = \sqrt{N}\left(X^N_k-X_k\right)$. For all $k\geq1$, the sequence $(V^N_k)_{N\geq1}$ satisfies the assumptions of Theorem \ref{prop:TCL}.

Then, we use the truncation argument to show that the sequence $(V^N)_{N\geq1}$ is tight on $\DD([0,T-\delta])$ and that it admits a unique limit point. Indeed, since $\P(\tau_{k}\leq T-\delta)$ converges to $0$, for all $\eta>0$ there exists $k\geq 1$ such that $\P(\tau_{k-1}\leq T-\delta)<\eta/3$. Besides, as $(Z^N_k)_N$ converges to $Z_k$, there exist $N_0\geq 1$ and $0<\varepsilon<1$ such that for all $N\geq N_0$ 
\[\P\left(\sup_{0\leq s\leq T-\delta}|Z^N_k(s)-Z_k(s)|>\varepsilon\right)<\eta/3.\]
Moreover, from Theorem \ref{prop:TCL}, the sequence $(V^N_k)_{N\geq1}$ satisfies Aldou's criterion. So there exist $N_1\geq 1$ and $M>0$ such that for all $N\geq N_1$
\[\P\left(\sup_{0\leq s\leq T-\delta}|V_k^N(s)|>M\right)<\eta/3.\] 

Then, under $\tau_{k}\leq T-\delta$, $\sup_{0\leq s\leq T-\delta}|Z_k^N(s)-Z_k(s)|\leq \varepsilon$ and $\sup_{0\leq s\leq T-\delta}|V_k^N(s)|\leq M$, we have $\tau^N_k>T-\delta$ and
\[\sup_{0\leq s\leq T-\delta}|V^N(s)|\leq M.\]
Hence, for all $N\geq N_0\vee N_1$, we have 
\begin{align*}
P\left(\sup_{0\leq s\leq T-\delta}|V^N(s)|>M\right)
\leq & \P\left(\tau_{k-1}\leq T-\delta\right) +\P\left(\sup_{0\leq s\leq T-\delta}|Z_k^N(s)-Z_k(s)|> \varepsilon\right)\\
&+ \P\left(\sup_{0\leq s\leq T-\delta}|V_k^N(s)|>M\right)\\
<& \eta
\end{align*}  
Thus, $(V^N)_{N\geq1}$ satisfies the first condition of Aldou's criterion. We can check the second condition with the same trick. Indeed, for all stopping time $\tau, \sigma$ such that $\tau+\sigma\leq T-\delta$, we have
\begin{align*}
\P\left(|V^N({\tau+\sigma}) - V^N(\tau)|\geq\varepsilon\right)
\leq & \P\left(\tau_{k-1}\leq T-\delta\right) +\P\left(\sup_{0\leq s\leq T-\delta}|Z_k^N(s)-Z_k(s)|> \varepsilon\right)\\
&+ \P\left(|V^N_k(\tau+\sigma) - V^N_k(\tau)|\geq\varepsilon\right)
\end{align*}
Therefore, the sequence $(V^N)_{N\geq1}$ is tight. Now, let us prove that it admits a unique limit point. Indeed, we have
\begin{align*}
\P\left(\sup_{0\leq s\leq T-\delta}|V^N(s)-V(s)|\geq\varepsilon\right)
\leq & \P\left(\tau_{k-1}\leq T-\delta\right) +\P\left(\sup_{0\leq s\leq T-\delta}|Z_k^N(s)-Z_k(s)|> \varepsilon\right)\\
&+ \P\left(\sup_{0\leq s\leq T-\delta}|V^N_k(s)-V_k(s)|\geq\varepsilon\right)
\end{align*}
Hence, whenever a subsequence of $(V^N)_{N\geq1}$ converges in distribution, up to another extraction, we can assume that $(V^N_k)_{N\geq1}$ converges in probability to $V_k$ and so the subsequence  of $(V^N)_{N\geq1}$ converges in probability to $V$. This proves that $V$ is the unique limit point  of $(V^N)_{N\geq1}$. Therefore,  $(V^N)_{N\geq1}$ converges in distribution, for the uniform topology on $\DD([0,T-\delta])$, to $V$. This ends the proof.
\end{proof}

\section*{Acknowledgement}
I would like to thank Arnaud Debussche for his suggestions and for our useful discussions. This research is partially supported by the Centre Henri Lebesgue (ANR-11-LABX-0020-0).

\bibliographystyle{abbrv}
\bibliography{biblio}
\end{document}